\theoremstyle{plain}
\newtheorem{thm}{Theorem}[section]
\newtheorem{lem}[thm]{Lemma}
\theoremstyle{definition}
\theoremstyle{remark}
\title{A converse to Schreier's index-rank formula}
\author{Ralph Strebel}
\date{\today}
\begin{document}
\thispagestyle{plain}
%========================
\maketitle 

\textbf{1. }
Let $G$ be a group and let $\varphi$ be a real-valued function, 
defined on all subgroups $G_1$ with finite index in $G$.
Call $\varphi$ \emph{submultiplicative} 
if the inequality
\begin{equation}
\label{eq:submultiplicative}
\varphi(G_2) \leq [G_1 : G_2] \cdot \varphi(G_1)
\end{equation}
holds for all pairs of subgroups $G_2 \leq G_1$ of $G$ 
with $G_2$ a subgroup of finite index in $G$;
call it \emph{multiplicative} if equality holds throughout in inequality \eqref{eq:submultiplicative}.

The rank $\rk(G)$ of a finitely generated group $G$ is, by definition, 
the minimal number of elements generating it.
By a Theorem of Reidemeister's, 
subgroups with finite index in finitely generated groups are finitely generated; 
moreover, the rank function on its subgroups of finite index is submultiplicative
(see, \eg \cite[p. 89, Thm.\;2.7]{MKS66}). 
For finitely generated \emph{free} groups, 
a sharper result holds: Schreier's index-rank  formula
(\cite{Sch27} or \cite[p.\;16, Prop.\;3.9]{LS77}) asserts 
that the function  $\rk-1$ is multiplicative.
It follows
that the function $\rk-1$ is submultiplicative on every finitely generated group.
\medskip

\textbf{2. }
In  \cite[§ 2]{LuDr81} Lubotzky and van den Dries ask 
whether a finitely generated group $G$ is necessarily free 
if its function $\rk-1$ is multiplicative
and if, in addition, $G$ is \emph{residually finite}.
One has, of course, to require 
that $G$ contains sufficiently many subgroups of finite index
in order to rule out counter-examples 
like finitely generated infinite simple groups or,
more concretely, G. Higman's 4-generator 4-relator group
\[
\langle 
a_1, a_2, a_3, a_4 \mid  a_i a_{i+1} a_i^{-1} a_{i+1}^{-2} \text{ for all } i \in \Z/4\Z \,\rangle
\]
(see \cite{Hig51b} or \cite[p.\;18, Prop.\;6]{Ser77b} 
for a discussion of this group).
\smallskip

\textbf{3. } 
In this note the question raised by Lubotzky and van den Dries 
is shown to have an affirmative answer:
\begin{thm}
\label{thm:Answer}
A finitely generated, residually finite group for which the function $\rk-1$ is multiplicative is necessarily free.
\end{thm}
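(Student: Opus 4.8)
\quad Write $d=\rk(G)$. The plan is to dispose of the cases $d\le 1$ by hand, and for $d\ge 2$ to produce a free subgroup of finite index in $G$ and then to show that this forces $G$ itself to be free. If $d=0$, then $G$ is trivial, hence free. If $d=1$, then $G$ is cyclic; were it finite, the trivial subgroup would be a subgroup of finite index of rank $0$, and \eqref{eq:submultiplicative} with equality applied to $\{1\}\le G$ would read $-1=\rk(\{1\})-1=[G:\{1\}](\rk(G)-1)=|G|\cdot 0=0$, which is absurd; so $G\cong\Z$, which is free. Assume henceforth $d\ge 2$. Taking $G_1=G$ and letting $G_2$ range over all subgroups $H$ of finite index in \eqref{eq:submultiplicative} (with equality), we get $\rk(H)=1+[G:H](d-1)$ for every such $H$; in particular $\rk(H)\to\infty$ along any strictly descending chain of subgroups of finite index, so $G$ is infinite and every proper subgroup of finite index has rank exceeding $d$. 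Note that both hypotheses — multiplicativity of $\rk-1$, and residual finiteness — are inherited by every subgroup of finite index.

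Next I would reduce to producing one free subgroup of finite index. Fix an epimorphism $\pi\colon F\twoheadrightarrow G$ with $F$ free of rank $d$, and put $R=\ker\pi$ and $\bar R=R/[R,R]$, the associated relation module. If $N$ is a normal subgroup of finite index in $G$, corresponding to $R\le S\le F$ with $S$ normal of finite index in $F$, then Schreier's index-rank formula makes $S$ free of rank $1+[G:N](d-1)$, which by the previous paragraph equals $\rk(N)$; thus $S\twoheadrightarrow N$ is a presentation of $N$ on the minimal possible number of generators, again with relation module $\bar R$. Since an epimorphism between free groups of equal finite rank is an isomorphism, $N$ is free if and only if $R=1$, that is, if and only if $\bar R=0$; and exactly the same criterion decides whether $G$ is free. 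Hence it is enough to exhibit one subgroup of finite index in $G$ that is free: its normal core is then a free normal subgroup of finite index, and the above gives that $G$ is free.

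To produce such a subgroup I would use the theory of ends. Being infinite, $G$ has exactly one end or more than one. In the one-ended case one must derive a contradiction from the rank identities; granting this for the moment, $G$ has more than one end, so by Stallings' theorem it splits over a finite subgroup, and by Dunwoody's accessibility theorem it is the fundamental group of a finite graph of groups with finite edge groups and vertex groups that are finite or one-ended. Replacing $G$ by a suitable subgroup of finite index, one obtains a normal subgroup $N$ of finite index that is free precisely when all vertex groups are trivial. Now $N$ free gives $\chi(N)=1-\rk(N)=-[G:N](d-1)$, hence $\chi(G)=\chi(N)/[G:N]=1-d=1-\rk(G)$; and a graph-of-groups computation — immediate from Grushko's theorem when the edge groups are trivial, and somewhat more delicate in general — yields $\chi(G)\ge 1-\rk(G)$ for every finitely generated virtually free group, with equality only if every vertex group is trivial. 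Thus the decomposition is trivial, so $N$, and therefore $G$, is free. (Alternatively one could try to prove directly that $G$ has cohomological dimension $\le 1$ — using residual finiteness to detect, inside finite quotients, any obstruction carried by a nonzero relation module — and then quote the Stallings--Swan theorem.)

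The main obstacle is this last step, and within it the disposal of the one-ended case: extracting from the purely numerical hypothesis, which constrains \emph{all} subgroups of finite index simultaneously, the structural conclusion that $G$ splits over a finite subgroup (equivalently, has cohomological dimension one). That some such input is indispensable is clear, since the hypothesis holds vacuously for a finitely generated infinite simple group, or for G.\ Higman's group; this is precisely the point at which residual finiteness must be exploited essentially. The remaining ingredients — the cases $d\le 1$, the inheritance of the hypotheses, the reduction to finding one free subgroup of finite index, and the concluding comparison of $\rk$ with the rational Euler characteristic — are routine.
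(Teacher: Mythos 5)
Your proposal has a genuine gap at exactly the point you flag yourself: the one-ended case is never disposed of, and the passage from the purely numerical hypothesis to a splitting over a finite subgroup is asserted rather than proved. Everything before that (the cases $d\le 1$, the inheritance of the hypotheses by finite-index subgroups, and the Hopfian-property argument reducing the theorem to the existence of a single free subgroup of finite index) is correct but routine; the entire difficulty of the theorem is concentrated in the step you postpone. Two further technical problems would remain even if that step were granted: Dunwoody's accessibility theorem is a statement about finitely \emph{presented} groups (and fails for some finitely generated ones, also by Dunwoody), whereas your $G$ is only assumed finitely generated; and your graph-of-groups decomposition only yields a free finite-index subgroup if all vertex groups are finite, i.e., if $G$ is already virtually free --- which is essentially what you are trying to prove. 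So the argument as written is circular at its core: you need to rule out one-ended (and, more generally, non-virtually-free) behaviour, and no mechanism for doing so is supplied.

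The paper's route is entirely different and much more elementary, and it is worth seeing where residual finiteness actually enters. One fixes a presentation $\pi\colon F(\XX)\epi G$ with $F(\XX)$ free of rank $\rk(G)$ and, assuming the kernel $R$ is nontrivial, picks a relator $r\in R\smallsetminus\{1\}$ of \emph{minimal length} $m$. Minimality forces the $m$ proper initial segments of the reduced word for $r$ to map to distinct elements of $G$; residual finiteness then provides a finite-index normal subgroup $G_1$ of $G$ in whose quotient these segments stay distinct. Pulling back to $F_1=\pi^{-1}(G_1)$, those initial segments form a partial Schreier transversal, and the Schreier rewriting process turns $r$ into a member of a basis of $F_1$. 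Since $\rk-1$ is multiplicative on free groups, that basis has $(\rk(G)-1)\cdot[G:G_1]+1$ elements, one of which dies under $\pi$; hence $G_1$ is generated by at most $(\rk(G)-1)\cdot[G:G_1]$ elements, contradicting the multiplicativity of $\rk-1$ on $G$. This is Lemma \ref{lem:r-is-primitive} of the paper, and it replaces all of the Stallings--Dunwoody machinery with combinatorics of Schreier transversals; I would encourage you to look for an argument of this kind whenever the hypothesis ``residually finite'' must be converted into a concrete finite-index subgroup with a prescribed separation property.
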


Theorem \ref{thm:Answer} will be a consequence of
\begin{lem}
\label{lem:r-is-primitive}
Let $G\neq\{1\}$ be a finitely generated, residually finite group.
Choose a presentation $\pi_\star \colon F(\XX) /R \iso G$
with $F(\XX)$ a free group of rank $\rk(G)$.
If $R \neq \{1\}$,
select an element $r \in R \smallsetminus \{1\}$ of minimal length.
Then $r$ is a member of a basis of a subgroup $F_1$ of $F$ 
that contains  $R$ and has finite index in $F$.
\end{lem}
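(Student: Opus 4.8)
The plan is to construct $F_1$ as the fundamental group of a finite connected covering graph $\Gamma$ of the rose on $\XX$, to exploit the minimality of $r$ so that the loop in $\Gamma$ spelling $r$ traverses some edge exactly once, and then to conclude by the elementary fact that an element represented by such a loop belongs to a basis of $\pi_1(\Gamma,1)$.

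First I would normalise. Since $R$ is normal in $F$ and $r$ has minimal length in $R\smallsetminus\{1\}$, the word $r$ is reduced and even cyclically reduced, because a cyclic permutation of a word that is not cyclically reduced is a strictly shorter conjugate, still in $R$. Since $\rk F(\XX)=\rk(G)$, no element of $\XX^{\pm1}$ lies in $R$ (otherwise $G$ would be generated by fewer than $\rk(G)$ of the $\bar x$), so $\ell:=\ell(r)\geq2$. Write $r=a_1\cdots a_\ell$ with $a_i\in\XX^{\pm1}$ and put $p_i:=a_1\cdots a_i\in F$ for $0\le i\le\ell$, so $p_0=1$ and $p_\ell=r$. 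The decisive consequence of minimality is that the images $\bar p_0,\dots,\bar p_{\ell-1}$ of $p_0,\dots,p_{\ell-1}$ in $G$ are pairwise distinct: if $\bar p_i=\bar p_j$ with $0\le i<j\le\ell-1$, then $p_i^{-1}p_j=a_{i+1}\cdots a_j$ would be a reduced, hence nontrivial, word of length $j-i<\ell$ lying in $R$, a contradiction. Note also that $\bar p_\ell=1=\bar p_0$, since $r\in R$.

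Next I would use residual finiteness of $G$: choose a finite quotient $G\epi Q$ whose kernel misses the finitely many nontrivial elements $\bar p_i^{-1}\bar p_j$ $(0\le i<j\le\ell-1)$, and let $\phi\colon F\to Q$ be the induced epimorphism. Put $F_1:=\ker\phi$; then $R\subseteq F_1$, the index $[F:F_1]=|Q|$ is finite, and $F_1$ is free of finite rank. Identify $F_1$ with $\pi_1(\Gamma,1)$, where $\Gamma$ is the Cayley graph of $Q$ with respect to $\phi(\XX)$, based at the vertex $1\in Q$; then $r$ is represented by the reduced closed edge-path $\gamma\colon 1=q_0\to q_1\to\cdots\to q_\ell=1$, where $q_i:=\phi(p_i)$ and the $i$-th step is the edge joining $q_{i-1}$ to $q_i$ that reads $a_i$. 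By the choice of $Q$ the vertices $q_0,\dots,q_{\ell-1}$ are pairwise distinct while $q_\ell=q_0$; hence for $\ell\geq3$ the $\ell$ edges of $\gamma$ have pairwise distinct endpoint-pairs $\{q_{i-1},q_i\}$ and so are pairwise distinct, and for $\ell=2$ the two edges of $\gamma$ are distinct because $\gamma$ has no backtracking and $q_0\ne q_1$. In every case $\gamma$ traverses each of its (at least two) edges exactly once.

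It then remains --- and this I expect to be the only non-formal step --- to show that if a reduced closed edge-path in a finite connected graph $\Gamma$ traverses an edge $e_0$ exactly once, then the element $w$ of $\pi_1(\Gamma,1)$ it represents lies in a free basis. Such an $e_0$ cannot be separating, since a closed path must cross a separating edge an even number of times; so there is a spanning tree $T$ of $\Gamma$ with $e_0\notin T$, with associated free basis $\{b_e:e\notin T\}$ of $\pi_1(\Gamma,1)$, each $b_e$ crossing $e$ once and otherwise running inside $T$. Splitting the path at its unique passage through $e_0$ and inserting the tree paths to the two ends of $e_0$ gives $w=u\,b_{e_0}\,u'$ with $u,u'$ words in the $b_e$ with $e\ne e_0$; hence $\{u\,b_{e_0}\,u'\}\cup\{b_e:e\notin T,\ e\ne e_0\}$ is obtained from the basis by Nielsen transformations, so is again a basis of $\pi_1(\Gamma,1)=F_1$, and it contains $w$. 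Applying this to $\gamma$ and one of its edges shows that $r$ is a member of a basis of $F_1$, which is the assertion. The two points needing care are keeping the $\bar p_i$ distinct in $G$ rather than merely in $F$ --- which is precisely where minimality of $r$ enters --- and the passage from ``edge crossed once'' to ``basis element''.
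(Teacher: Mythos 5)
Your argument is correct and is essentially the proof in the paper, translated from Schreier transversals into the language of covering graphs: the two decisive steps --- minimality of $r$ forces its proper initial segments to have pairwise distinct images in $G$, and residual finiteness transfers this distinctness to a finite quotient, so that the loop reading $r$ in the coset graph of $F_1$ is embedded --- are exactly the paper's. The only divergence is in the endgame and is cosmetic: the paper extends the list of initial segments itself to a Schreier transversal, so that $r$ (after possibly replacing the last generator by its inverse) is literally a Schreier generator, whereas you choose a spanning tree missing one edge of the embedded cycle and finish with a Nielsen transformation, which spares you the paper's case distinction on the sign of the last letter.
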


\begin{proof}
Let $r$ be represented by the freely reduced word 
$w = s_1s_2 \cdots s_m$ with letters $s_i$ in $\XX \cup \XX^{-1}$.
By the minimal property of $r$,
no non-empty, proper subword $s_i \cdot s_{1+1} \cdots s_j$ of $w$
represents a relator in $R$
and thus the initial segments 
\begin{equation}
\label{eq:Initial-segments}
1, s_1, s_1s_2, \ldots, s_1 s_2 \cdots s_{m-1}
\end{equation}
of $w$ map to distinct elements of $G$.
As the group $G$ is residually finite and finitely generated, 
there exists therefore a \emph{normal} subgroup $G_1\triangleleft{} G$ of finite index 
such that the initial segments listed in \eqref{eq:Initial-segments} represent distinct elements of the quotient group $G/G_1$.
Let $F_1$ denote the full preimage of $G_1$ under the projection 
$\pi \colon F(\XX) \epi G$.  
The property just stated can then be rephrased by saying
that the initial segments \eqref{eq:Initial-segments} of $w$ 
represent distinct cosets of $F_1$ on $F$.
The list \eqref{eq:Initial-segments} 
is thus a partial Schreier transversal of $F_1$ in $F$;
it can be enlarged to a Schreier transversal $\TT_1$ of $F_1$ in $F$.
Let $\XX_1$ of be the basis of $F_1$ obtained from $\TT_1$
by Schreier's method.

Two cases now arise.
If $s_m \in \XX$ the word $w = s_1 \cdots s_{m-1} \cdot s_m$
represents an element of the  basis  $\XX_1$ of $F_1$.
Otherwise, we replace the element $x = s_m^{-1}$ of the basis $\XX$ 
by its inverse $x^{-1} = s_m$ 
and arrive at a new basis 
$\XX' = \left(\XX \smallsetminus \{x\} \right) \cup \{s_m\}$ 
of $F$.
Let $\TT'$ be the set of words obtained from $\TT$ 
by rewriting each element in $\TT$ as a word in $\XX'$.
Then $\TT'$ is a transversal of the subgroup $F_1$ in $F$
and the basis derived from $\XX'$ and $\TT'$ 
by Schreier's method contains the word 
$r^{-1} = s_m^{-1} \cdot (s_{m-1}^{-1} \cdots s_1^{-1})$.
But if so $F_1$ admits again a basis that contains $r$.
\end{proof}

We are left with deducing Theorem  \ref{thm:Answer} 
from Lemma \ref{lem:r-is-primitive}.
Suppose $G$ has rank $m$;
let  $F$ be a free group of the same rank 
and choose an epimorphism $\pi \colon F \epi G$.
If $\pi$ is injective the group $G$ is free, as claimed.
Otherwise, the kernel $R$ of $\pi$ is not reduced to the unit element,
whence Lemma \ref{lem:r-is-primitive} gives us a subgroup $F_1$ 
having finite index in $F$ and comprising $R$, 
and, in addition, a basis $\XX_1$ of $F_1$
that contains an element $r' \in R$.
Since the function $\rk - 1$ is multiplicative on $F$,
this basis has $(m-1) \cdot \card (F/F_1)+1$ elements,
one of which lies in the kernel of $\pi$,
whence $G_1$ is generated by no more than $(\rk G - 1 ) \cdot \card(G/G_1)$ elements.
The function $\rk - 1$ is therefore not multiplicative on $G$,
contrary to hypothesis.
\smallskip

\textbf{4. } 
 One may ask
how far the function $\rho = \rk -1$ can deviate from being multiplicative.
 For some groups an explicit answer is available.
 If, \eg $G$ is free abelian, the function $\rk -1$  is constant;
  if $G$ is an orientable surface group with presentation
  \[
  \langle  a_1, b_1 \ldots, a_g, b_g 
  \mid
    \prod \nolimits_{i = 1}^g [a_i, b_i] = 1 \rangle
  \]
  the facts that subgroups of finite index are again of this form 
  and that the Euler characteristic is multiplicative
  imply the relation
  \begin{equation}
  \label{eq:description}
  \rho(G_1) = [G : G_1] \cdot \rho(G)  +  \left(1 - [G : G_1] \right),
  \end{equation}
  valid for all subgroups $G_1$ of finite index in an orientable surface group.
 \smallskip
  
  \textbf{5. }
  I found the proof of Theorem 1 at the beginning of 1980 
  and communicated my argument to Alex Lubotzky;
  he encouraged me to turn my sketch into a formal proof.
  This task was carried out in 1981;
  the typescript was finished in December of that year.
  I sent one of my preprints to Jean-Pierre Serre;
 he commented on it in a long letter at the beginning of 1982.
 I am very grateful to him for his critical remarks and suggestions.
 They have been incorporated into this new, amended and abridged, 
 version of my original note.
%  
%========================
%
%\newpage
\bibliography{Text.bbl}
\bibliographystyle{amsalpha}%
\end{document}